\newtheorem{theorem}{Theorem}[section]
\newtheorem{lemma}{Lemma}[section]
\newdefinition{rem}{Remark}
\numberwithin{equation}{section}
\journal{}
\begin{document}
\begin{frontmatter}
\title{Boundedness of solutions for the reversible system with low regularity in time}

\author[SDU,SDUW]{Jing Li
\corref{cor}}
\cortext[cor]{Corresponding author.}
\ead{xlijing@sdu.edu.cn}
\address[SDU]{School of Mathematics, Shandong University, Jinan 250100, P.R. China}
\address[SDUW]{School of Mathematics and Statistics, Shandong University, Weihai 264209, P.R. China}
\footnote{The work was supported in part by National
Nature Science Foundation of China (11601277).}
\begin{abstract}
In the present paper, it is proved that all solutions are bounded  for the reversible system
$\ddot{x}+\sum_{i=0}^{l}b_{i}(t)x^{2i+1}\dot{x}+x^{2n+1}+\sum_{i=0}^{n-1}a_{i}(t)x^{2i+1}=0,
0\leq l\leq [\frac{n}{2}]-1,t\in\mathbb{T}^{1}=\mathbb{R}/\mathbb{Z},$
where $a_{i}(t)\in C^{1}(\mathbb{T}^{1})\;([\frac{n-1}{2}]+1\leq i\leq n-1),$
 $a_{j}(t)\in L^{1}(\mathbb{T}^{1})\;(0\leq j\leq [\frac{n-1}{2}])$ and
 $b_{k}(t)\in C^{1}(\mathbb{T}^{1})\;(0\leq k\leq l).$

\end{abstract}

\begin{keyword}
Reversible system, KAM theory, Boundedness of solutions\\
 AMS classification: 34C15; 34D10; 37J40
\end{keyword}

\end{frontmatter}

\section{Introduction}

The boundedness of all solutions for the differential equation
\begin{equation}\label{1}
\ddot{x}+f(x, t)\dot{x}+g(x, t)=0,\;\;x\in \mathbb{R}
\end{equation}
has been widely and deeply investigated by many authors since 1940's. The boundedness of
solutions depends heavily on the structure of \eqref{1}.

(i) When $f(x, t)\equiv 0,$ \eqref{1} is a Hamiltonian system. Let
$g(x, t)=x^{2n +1}+\sum_{j=0}^{2n}P_{j}(t)x^{j},$
where $P_{j}(t)$'s are of period $1 $. It has been proved by Dieckerhoff-Zehnder in \cite{Dieckerhoff-Zehnder1987}
that all solutions of \eqref{1} are bounded in $t\in \mathbb{R}$ if $P_{j}(t)\in {C}^{\infty}.$ The smoothness
of $P_{j}(t)$'s has been recently reduced to ${C}^{\gamma}$ with $0<\gamma<1-\frac{1}{n}$ in \cite{Yuan2017}. See
\cite{Dieckerhoff-Zehnder1987,Laederich-Levi1991,Yuan1995,Yuan1998,Yuan2000,Yuan2017} for more details.

(ii) When $f(x, t)\not\equiv 0$, \eqref{1} is dissipative with more appropriate conditions. A compact
absorbing domain in the phase space can be constructed such that all solutions of \eqref{1} always go into this domain for
$t\geq t_{0}.$ See \cite{Graef,Levinson1943,Reuter1951}, for example.

(iii) When $f(x, t)$ and $g(x, t)$ are odd in $x$ and even in $t,$ \eqref{1} is neither Hamiltonian nor dissipative.
In this case, \eqref{1} is actually in the class of so-called  reversible systems. After the Kolmogorov-Arnold-Moser (KAM) theory was established,
Arnold \cite{Arnold1984,Arnold1986} and Moser \cite{Moser} among others proposed the study of the existence of invariant tori for the
reversible systems by KAM technique, i.e. to establish KAM theory for the reversible system. See \cite{Sevryuk} for more details.

Let
$
f(x,t)=\sum_{i=0}^{l}b_{i}(t)x^{2i+1}$, $0\leq l\leq [\frac{n}{2}]-1,$ and $g(x,t)=x^{2n+1}+\sum_{i=0}^{n-1}a_{i}(t)x^{2i+1},
$
where $a_{i}(t)$'s and $b_{i}(t)$'s are even and of period 1. Then \eqref{1} is a simple reversible system, and it has
non-trivial dynamical behaviors. The KAM theory for reversible systems \cite{Moser,Sevryuk} deals with some integrable systems
with small reversible perturbations. When $a_{i}(t)\equiv b_{i}(t)\equiv 0,$ \eqref{1} is indeed integrable. However, \eqref{1}
can not be regarded as an integrable system with small reversible perturbation when $a_{i}(t)\not\equiv 0,$ $b_{i}(t)\not\equiv 0.$
Following \cite{Dieckerhoff-Zehnder1987}, \eqref{1} can be transformed into a new system which consists of an integrable system with
a small reversible one around the infinity by a number of so-called involution transforms. In that direction, Liu in \cite{Liu1991} proved that all solutions are
bounded for
$\ddot{x}+b x \dot{x}+cx^{2n+1}=p(t),$
where $b, c$ are positive constants and $p(t)$ is a continuous 1-period function. This result was generalized to the more general case where
$f(x,t)=\sum_{i=0}^{l}b_{i}(t)x^{2i+1},0\leq l\leq [\frac{n}{2}]-1$, and $g(x,t)=x^{2n+1}+\sum_{i=0}^{n-1}a_{i}(t)x^{2i+1}
$, where $a_{i}(t)\in C^{2}$ and $b_{i}(t)\in {C}^{2}.$
Later, the smoothness of $a_{i}(t)$ and $b_{i}(t)$ in \cite{Piao2008, Rong2001} were furthermore relaxed to
$a_{i}(t),\;b_{i}(t)\in {C}^{1+Lip}.$
In the present paper, we relax the smoothness to $a_{i}(t), b_{i}(t)\in {C}^{1}.$ More exactly, we have the following theorem.
\begin{theorem}\label{thm1}
Consider
\begin{equation}\label{*}
\ddot{x}+\sum_{i=0}^{l}b_{i}(t)x^{2i+1}\dot{x}+x^{2n+1}+\sum_{i=0}^{n-1}a_{i}(t)x^{2i+1}=0,\;\;t\in\mathbb{T}^{1}=\mathbb{R}/\mathbb{Z},
\end{equation}
where $0\leq l\leq [\frac{n}{2}]-1,$ and
\begin{itemize}
  \item $b_{i}(t)\in C^{1}(\mathbb{T}^{1}), $ $b_{i}(-t)=b_{i}(t), 0\leq i \leq l,$
  \item $a_{i}(t)\in C^{1}(\mathbb{T}^{1}), [\frac{n-1}{2}]+1\leq i\leq n-1;$ $a_{i}(t)\in L^{1}(\mathbb{T}^{1}),0\leq i\leq [\frac{n-1}{2}];$
          $a_{i}(-t)=a_{i}(t), 0\leq i \leq n-1.$
\end{itemize}
Then all solutions of \eqref{*} are bounded, i.e. the solution $(x(t), \dot{x}(t))$ with initial values $(x(0), y(0))$ exists for all
$t\in \mathbb{R}$ and
$$\sup_{t\in \mathbb{R}}(|x(t)|+|\dot{x}(t)|)\leq C_{(x(0), y(0))},$$
where $C_{(x(0), y(0))}>0$ is a constant depending on the initial values $(x(0), y(0)).$
\end{theorem}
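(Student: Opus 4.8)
The plan is to follow the Dieckerhoff--Zehnder strategy, adapted to the reversible category: transform \eqref{*} into a small perturbation of an integrable twist map near infinity, and then invoke a finitely-smooth reversible version of Moser's small-twist theorem to manufacture invariant curves at arbitrarily large amplitude. First I would pass to the planar system $\dot x = y$, $\dot y = -x^{2n+1} - \sum_{i=0}^{n-1} a_i(t)x^{2i+1} - \big(\sum_{i=0}^{l} b_i(t)x^{2i+1}\big)y$, and record that the evenness of $a_i,b_i$ in $t$ together with the oddness of the nonlinearities in $x$ renders the flow reversible with respect to the involution $(x,y,t)\mapsto(-x,y,-t)$; this reversibility must be preserved by every subsequent change of variables. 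The unperturbed equation $\ddot x + x^{2n+1}=0$ is integrable with energy $h=\tfrac12 y^2 + \tfrac{1}{2n+2}x^{2n+2}$, so I would introduce generalized action--angle coordinates built from the Cauchy functions $C,S$ solving the model equation, turning the integrable part into a rotation whose frequency depends monotonically on the action (the twist).

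The decisive device is a rescaling at infinity: taking the action (equivalently the amplitude) to be large and introducing a small parameter $\varepsilon$ inversely proportional to it, \eqref{*} becomes a reversible $O(\varepsilon)$-perturbation of the integrable twist. I would then exchange the roles of time $t$ and angle $\theta$, using $\theta$ as the new independent variable and the rescaled action as the new phase variable, so that the old time-$1$ map becomes a Poincaré return map that is a small twist map on an annulus near infinity. A finite sequence of averaging (normal-form) transformations would then be applied to push the $\varepsilon$-dependent perturbation to high order, always choosing these transformations so as to keep the map within the class of reversible mappings for which the small-twist theorem is available.

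The heart of the matter, and the main obstacle, is reconciling the low regularity of the coefficients with the smoothness demanded by the averaging and by KAM, since each normal-form step ordinarily costs one derivative in $t$. Here I would exploit a \emph{regularity--smallness tradeoff}: after rescaling at infinity, the lowest-order terms $x^{2i+1}$ with $0\le i\le[\frac{n-1}{2}]$ acquire the highest negative powers of the amplitude and hence the greatest smallness, so their coefficients $a_i$ may be merely $L^1$ in $t$, whereas the higher-order terms with $[\frac{n-1}{2}]+1\le i\le n-1$ carry less smallness and so demand one genuine derivative, i.e.\ $a_i\in C^1$. The index threshold $[\frac{n-1}{2}]$ is precisely where this balance tips. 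After the time--angle swap, moreover, integration in the new independent variable upgrades the $L^1$-in-$t$ contributions to bounded, controllable terms, so that the accumulated perturbation lands in a function space that is finitely differentiable in the angle and $O(\varepsilon)$-small in the amplitude---exactly the hypotheses of the reversible small-twist theorem. (An equivalent route is to smooth the coefficients by a Jackson--Moser--Zehnder approximation and track the errors, but the tradeoff above is more transparent.)

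Applying the reversible twist theorem then yields invariant curves of the Poincaré map for arbitrarily large amplitude. Finally I would translate this back: each such invariant curve corresponds to a closed Jordan curve encircling the origin in the $(x,y)$-plane that is invariant under the (reversible) flow, so any orbit starting inside it is confined for all $t\in\mathbb{R}$; since invariant curves occur arbitrarily far out, every solution stays in a bounded region, which gives the asserted estimate $\sup_{t\in\mathbb{R}}(|x(t)|+|\dot x(t)|)\le C_{(x(0),y(0))}$.
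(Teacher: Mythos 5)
Your overall architecture coincides with the paper's: action--angle variables built from $(S,C)$, a rescaling $x\mapsto Ax$ at infinity, a finite chain of reversibility-preserving averaging transformations, and a reversible small-twist/KAM theorem (the paper invokes Sevryuk's Theorem~1.1) applied to the time-$1$ map; your identification of the threshold $[\frac{n-1}{2}]$ between the $L^1$ and $C^1$ coefficients also matches the paper's splitting of the perturbation into $f_1+f_2$ and $g_1+g_2$. However, the step you yourself flag as ``the heart of the matter'' is not actually resolved by the tradeoff you describe. Your heuristic --- ``less smallness after rescaling, hence one genuine derivative'' --- does not explain why \emph{one} derivative suffices: the term $A^{2i-n}a_i(t)x^{2i+1}$ with $i=n-1$ is of size $A^{n-2}$ and must be pushed down to $O(A^{-1})$ by roughly $n$ successive averaging steps, so by your own accounting (``each normal-form step ordinarily costs one derivative in $t$'') this would demand $a_{n-1}\in C^{n}$, not $C^1$, which is exactly the obstruction that kept earlier results at $C^2$ and $C^{1+\mathrm{Lip}}$. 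The missing idea is the paper's bookkeeping with the classes $O_1(A^\Gamma)$ versus $O(A^\Gamma)$: at each step the generating function $V$ solves $dA^n\rho^{2\beta-1}\partial_\theta V+f_1=0$, so the only place a $t$-derivative is ever taken is the single term $\partial_t V=O(A^{-1})$, which is immediately deposited into the small remainder $f_2$ that is never averaged again; the terms that continue to be iterated, $\partial_\rho V\cdot f_1+\partial_\theta V\cdot g_1$, involve no new $t$-derivatives and stay in the one-$t$-derivative class. Without this (or an equivalent Jackson--Moser--Zehnder smoothing scheme with explicit error tracking, which you mention only parenthetically), the claim that $C^1$ suffices is unsupported.

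A second, more localized problem is your exchange of the roles of $t$ and $\theta$. The paper does not perform this exchange; it keeps $t$ as the independent variable and passes to the time-$1$ map, which is where the $L^1$ coefficients are tamed, since the map only sees them through $\int_0^1 a_i(t)\,dt$. Your stated reason for the exchange --- that ``integration in the new independent variable upgrades the $L^1$-in-$t$ contributions to bounded, controllable terms'' --- is backwards: one revolution in $\theta$ corresponds to a time interval of length $\sim A^{-n}$, so integrating $a_i(t(\theta))$ over a bounded $\theta$-interval concentrates the $L^1$ mass through a Jacobian of size $A^{n}$ and gives no uniform control. If you want to keep the exchange you must average over the new \emph{angle} $t$ (a full period of the coefficients), not over the new time $\theta$; as written, this step would fail precisely for the coefficients you are trying to allow to be merely $L^1$. (A minor further imprecision: the invariant curve produced by the twist theorem is invariant under the time-$1$ map, hence bounds an invariant torus of the flow in the extended phase space $(x,\dot x,t)$; it is not a flow-invariant Jordan curve in the $(x,\dot x)$-plane, though the confinement conclusion is unaffected.)
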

\begin{rem}
It is still open whether the smoothness of $a_{i}(t)$ and $b_{i}(t)$ can be relaxed to $C^{\kappa}$ $(0<\kappa<1)$ as in \cite{Yuan2017}.
\end{rem}
\section{Action-Angle variables}
Consider
\eqref{*}.
First, rescale $x \rightarrow Ax,$ where $A$ is a large constant. Then \eqref{*}  can be written as a system:
\begin{equation}\label{2.2}
\dot{x} =A^{n}y,\;\;
    \dot{y}=-A^{n}x^{2n+1}-\sum_{i=0}^{n-1}A^{2i-n}a_{i}(t)x^{2i+1}-\sum_{i=0}^{l}A^{2i+1}b_{i}(t)x^{2i+1}y.
\end{equation}
First of all, we consider an unperturbed Hamiltonian system
\begin{equation}\label{2.4}
\frac{dx}{dt}=y=\frac{\partial H_{0}}{\partial y},\;\;
   \frac{dy}{dt}=-x^{2n+1}=-\frac{\partial H_{0}}{\partial x},
\end{equation}
where
$H_{0}(x,y)=\frac{1}{2}y^{2}+\frac{1}{2n+2}x^{2n+2}.$
Assume $(S(t), C(t))$ is the solution of \eqref{2.4} with the initial condition $(S(0), C(0))=(0, 1).$
Clearly, this solution is periodic. Let $T_{0}$ be its minimal positive period. It follows from \eqref{2.4}
that $S(t)$ and $C(t)$ satisfy the following properties:
\begin{itemize}
  \item [(i)] $S(t),$ $C(t)\in C^{\omega}(\mathbb{T})$ \; (ii) $S(t+T_{0})=S(t),$ $C(t+T_{0})=C(t);$
  \item [(iii)] $\dot{S}(t)=C(t),$ $\dot{C}(t)=-S^{2n+1}(t);$ \;(iv) $S^{2n+2}(t)+(n+1)C^{2}(t)=n+1;$
  \item [(v)] $C(-t)=C(t),$ $S(-t)=-S(t).$
\end{itemize}
Following \cite{Dieckerhoff-Zehnder1987}, we define a diffeomorphism
$\psi_{0}: \mathbb{R}^{+}\times \mathbb{T}^{1} \rightarrow \mathbb{R}^{2}\backslash \{0\}$.
Let
$
\psi_{0}:
             x=c^{\alpha}\rho^{\alpha}S(\theta T_{0}), \;
             y=c^{\beta}\rho^{\beta}C(\theta T_{0}),
$
where $\alpha=\frac{1}{n+2},$ $\beta=1-\alpha,$ $c=\frac{1}{\beta T_{0}}.$ By a simple calculation,
we have $\mid\det \frac{\partial (x,y)}{\partial (\rho, \theta)}\mid=1.$ Thus $\psi_{0}$ is symplectic.
So by $\psi_{0},$ \eqref{2.2} is changed into
\begin{equation}\label{2.7}
\frac{d\rho}{dt}=f_{1}(\rho, \theta, t)+f_{2}(\rho, \theta, t), \;\;
  \frac{d\theta}{dt}=d A^{n} \rho^{2\beta-1}+g_{1}(\rho, \theta, t)+g_{2}(\rho, \theta, t),
\end{equation}
where $d=\beta c^{2\beta},$ and
\begin{eqnarray}
\label{2.8}f_{1}(\rho, \theta, t)&=&-\sum_{i=[\frac{n-1}{2}]+1}^{n-1}A^{2i-n}a_{i}(t)T_{0}c^{2(i+1)\alpha}\rho^{2(i+1)\alpha}C(\theta T_{0})S^{2i+1}(\theta T_{0})\nonumber\\
                       &&-\sum_{i=0}^{l}A^{2i+1}b_{i}(t)T_{0}c^{(2i+1)\alpha+1}\rho^{(2i+1)\alpha+1}C^{2}(\theta T_{0})S^{2i+1}(\theta T_{0}),\\
\label{2.9}
f_{2}(\rho, \theta, t)&=&-\sum_{i=0}^{[\frac{n-1}{2}]}A^{2i-n}a_{i}(t)T_{0}c^{2(i+1)\alpha}\rho^{2(i+1)\alpha}C(\theta T_{0})S^{2i+1}(\theta T_{0}),\\
\label{2.10}
g_{1}(\rho, \theta, t)&=&\alpha\sum_{i=[\frac{n-1}{2}]+1}^{n-1} A^{2i-n}a_{i}(t) c^{2(i+1)\alpha}\rho^{2(i+1)\alpha -1}S^{2(i+1)}(\theta T_{0})\nonumber\\
                       &&+\alpha\sum_{i=0}^{l}A^{2i+1}b_{i}(t) c^{2(i+1)\alpha+1}\rho^{2(i+1)\alpha}C(\theta T_{0})S^{2(i+1)}(\theta T_{0}),\\
\label{2.11}
g_{2}(\rho, \theta, t)&=&\alpha\sum^{[\frac{n-1}{2}]}_{i=0} A^{2i-n}a_{i}(t) c^{2(i+1)\alpha}\rho^{2(i+1)\alpha -1}S^{2(i+1)}(\theta T_{0}).
\end{eqnarray}
Recall $C(-t)=C(t),$ $S(-t)=-S(t),$ $a_{i}(-t)=a_{i}(t)$ and $b_{i}(-t)=b_{i}(t).$ 
So we have
\begin{eqnarray}
&& \label{2.12} f_{1}(\rho, -\theta, t)=-f_{1}(\rho, \theta, t),  f_{2}(\rho, -\theta, -t)=-f_{2}(\rho, \theta, t),
\;f_{1}(\rho, -\theta, -t)=-f_{1}(\rho, \theta, t),\;\\
&& \label{2.14} g_{1}(\rho, -\theta, t)=g_{1}(\rho, \theta, t),\;g_{1}(\rho, \theta, -t)=g_{1}(\rho, \theta, t),
\;g_{2}(\rho, -\theta, -t)=g_{2}(\rho, \theta, t).
\end{eqnarray}
In addition, by \eqref{2.8}-\eqref{2.11}, we have
$f_{1}=O_{1}(A^{n-1}), f_{2}=O(A^{-1}), g_{1}=O_{1}(A^{n-1}), g_{2}=O(A^{-1}),
$
where $f(\rho, \theta, t, A)=O_{1}(A^{\Gamma})$ means
 \begin{eqnarray}
&&\sup_{\begin{array}{c}
(\rho, \theta, t)\in D_{4-CA^{-1}}\times  \mathbb{T} ^{1}
\end{array}}
\Big |\sum_{p+q=k}\frac{\partial^{k+1}f}{\partial \rho^{p}\partial \theta^{q}\partial t}\Big |\leq C_{k} A^{\Gamma},\;\;A\gg1,\;\;k\in\mathbb{Z}_{+}
\end{eqnarray}
with constant $C_{k}$ depending on $k$ and
 \begin{eqnarray}
&&D_{s}=\{(\rho, \theta)\in \mathbb{R}\times \mathbb{T}^{1}: 1\leq \rho \leq s, \theta \in \mathbb{T}^{1}\},
\end{eqnarray}
and $f(\rho, \theta, t, A)=O(A^{\Gamma})$ means
\begin{eqnarray}
\sup_{\begin{array}{c}
(\rho, \theta, t)\in D_{4-CA^{-1}}\times  \mathbb{T} ^{1}
\end{array}}
\Big |\sum_{p+q=k}\frac{\partial^{k}f}{\partial \rho^{p}\partial \theta^{q}}\Big |\leq C_{k} A^{\Gamma},\;\;A\gg1,\;\;k\in\mathbb{Z}_{+}.
\end{eqnarray}
\section{Coordinate changes}
\begin{lemma}\label{3.1}
There exists a diffeomorphism
$\Psi^{1}: \rho=\mu+U_{1}(\mu, \phi, t),\theta=\phi$
such that $\Psi^{1}(D_{4-C_{0}A^{-1}})\subset D_{4}$ with a constant $C_{0}>0$ and \eqref{2.7} is changed into
$
   \frac{d\mu}{dt}=f_{1}^{(1)} (\rho, \theta, t)+f_{2}^{(1)}(\rho, \theta, t),\;
   \frac{d\phi}{dt}=d A^{n} \mu^{2\beta-1}+g_{1}^{(1)}(\rho, \theta, t)+g_{2}^{(1)}(\rho, \theta, t),
$
where $f_{1}^{(1)}, f_{2}^{(1)}$ and $g_{1}^{(1)}, g_{2}^{(1)}$ satisfy \eqref{2.12} and \eqref{2.14}, respectively,
and
\begin{eqnarray}
\label{3.2}f_{1}^{(1)}=O_{1}(A^{n-2}),\;f_{2}^{(1)}=O(A^{-1}),\;g_{1}^{(1)}=O_{1}(A^{n-1}),\;g_{2}^{(1)}=O(A^{-1}),\;(\mu, \phi)\in D_{4-C_{0}A^{-1}}.
\end{eqnarray}
\end{lemma}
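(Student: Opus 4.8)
The plan is to treat Lemma~\ref{3.1} as the first step of a Dieckerhoff--Zehnder-type averaging scheme: I seek a near-identity change $\Psi^{1}:\rho=\mu+U_{1}(\mu,\phi,t),\ \theta=\phi$ that eliminates the leading $O_{1}(A^{n-1})$ part of the drift $f_{1}$ in the $\rho$-equation, lowering it to $O_{1}(A^{n-2})$, while leaving the remaining orders intact. Differentiating $\rho=\mu+U_{1}$ along the flow of \eqref{2.7} and using $\theta=\phi$ gives
\[
\bigl(1+\partial_{\mu}U_{1}\bigr)\dot{\mu}=f_{1}+f_{2}-\partial_{\phi}U_{1}\bigl(dA^{n}\rho^{2\beta-1}+g_{1}+g_{2}\bigr)-\partial_{t}U_{1},
\]
where the right-hand side is evaluated at $\rho=\mu+U_{1}$. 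The only term of size $A^{n}$ available to balance $f_{1}=O_{1}(A^{n-1})$ is $dA^{n}\rho^{2\beta-1}\partial_{\phi}U_{1}$, so I would impose the homological equation $dA^{n}\mu^{2\beta-1}\partial_{\phi}U_{1}(\mu,\phi,t)=f_{1}(\mu,\phi,t)$.

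This equation is solvable in the class of $1$-periodic functions of $\phi$ precisely because $f_{1}$ has zero mean over $\theta$, which is guaranteed by the oddness $f_{1}(\rho,-\theta,t)=-f_{1}(\rho,\theta,t)$ from \eqref{2.12}. Thus I take $U_{1}(\mu,\phi,t)=\bigl(dA^{n}\mu^{2\beta-1}\bigr)^{-1}\int_{0}^{\phi}f_{1}(\mu,s,t)\,ds$. Since $f_{1}=O_{1}(A^{n-1})$ and the prefactor carries $A^{-n}$, this yields $U_{1}=O(A^{-1})$ and $\partial_{\phi}U_{1}=O_{1}(A^{-1})$; in particular $\Psi^{1}$ is a near-identity diffeomorphism with $\Psi^{1}(D_{4-C_{0}A^{-1}})\subset D_{4}$ for a suitable $C_{0}>0$, and $(1+\partial_{\mu}U_{1})^{-1}=1+O(A^{-1})$ is well defined. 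Substituting back and Taylor-expanding $f_{1},g_{1},g_{2}$ and $\rho^{2\beta-1}$ about $\rho=\mu$, the surviving contributions to $\dot{\mu}$ are the remainder $\partial_{\rho}f_{1}\cdot U_{1}=O_{1}(A^{n-2})$ and the cross term $-\partial_{\phi}U_{1}\,g_{1}=O_{1}(A^{n-2})$, together with $f_{2}$, $\partial_{\phi}U_{1}\,g_{2}$ and $\partial_{t}U_{1}$; the $\dot{\phi}$-equation acquires $dA^{n}(2\beta-1)\mu^{2\beta-2}U_{1}=O_{1}(A^{n-1})$ plus Taylor remainders, giving $g_{1}^{(1)}=O_{1}(A^{n-1})$ and $g_{2}^{(1)}=O(A^{-1})$ as claimed.

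The delicate point is the bookkeeping of regularity in $t$, and this is where I expect the main obstacle. Because $U_{1}$ is built only from $f_{1}$, whose coefficients $a_{i}\ (i\ge[\tfrac{n-1}{2}]+1)$ and $b_{i}$ lie in $C^{1}$, the map $U_{1}$ is $C^{1}$ in $t$ but no better; hence $\partial_{t}U_{1}=\bigl(dA^{n}\mu^{2\beta-1}\bigr)^{-1}\int_{0}^{\phi}\partial_{t}f_{1}\,ds$ is only continuous in $t$ and admits merely a supremum bound of size $A^{-1}$, not an $O_{1}$ bound. I would therefore place $\partial_{t}U_{1}$, along with $f_{2}$ and $\partial_{\phi}U_{1}\,g_{2}$ (the genuinely $L^{1}$-in-$t$ pieces inherited from the $a_{j}\in L^{1}$), into $f_{2}^{(1)}=O(A^{-1})$, while the $C^{1}$-in-$t$ remainders form $f_{1}^{(1)}=O_{1}(A^{n-2})$. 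Finally I would check that the reversible structure persists: since $f_{1}$ is odd in $\theta$ and even in $t$, its $\phi$-primitive $U_{1}$ is even in $\phi$ and even in $t$, equivalently $U_{1}(\mu,-\phi,-t)=U_{1}(\mu,\phi,t)$, so $\Psi^{1}$ commutes with the involution $(\rho,\theta,t)\mapsto(\rho,-\theta,-t)$ underlying \eqref{2.12}--\eqref{2.14}. A short parity count then shows each new term lands in the correct class: $f_{1}^{(1)}$ is odd in $\theta$ and even in $t$ and $f_{2}^{(1)}$ is odd under the involution (so they obey \eqref{2.12}), while the $g^{(1)}$-pieces are even (so they obey \eqref{2.14}), completing the lemma.
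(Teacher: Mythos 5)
Your proposal is correct and follows essentially the same route as the paper: the same homological equation $dA^{n}\rho^{2\beta-1}\partial_{\theta}U+f_{1}=0$ (solvable because oddness of $f_{1}$ in $\theta$ gives $[f_{1}]=0$), the same parity bookkeeping, and the same placement of the merely-continuous-in-$t$ term $\partial_{t}U_{1}$ together with the $L^{1}$ pieces into $f_{2}^{(1)}=O(A^{-1})$ rather than $f_{1}^{(1)}$. The only cosmetic difference is that you solve directly for the inverse map $U_{1}$, whereas the paper first constructs the forward map $\mu=\rho+V_{1}(\rho,\theta,t)$ and then obtains $U_{1}$ via the implicit function theorem.
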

\begin{proof}
Set
$\Phi^{1}:\;\;\mu=\rho+V_{1}(\rho, \theta, t),\;\;\phi=\theta.$
Under $\Phi^{1},$ we have
$\frac{d\mu}{dt}=\frac{d\rho}{dt}+\frac{\partial V_{1}}{\partial \rho}\frac{d\rho}{dt}
+\frac{\partial V_{1}}{\partial \theta}\frac{d \theta}{dt}+\partial_{t}V_{1}.$
By \eqref{2.7},
$
\frac{d\mu}{dt}=f_{1}+f_{2}+\frac{\partial V_{1}}{\partial \rho}(f_{1}+f_{2})
+\frac{\partial V_{1}}{\partial \theta}(d\rho ^{2\beta-1}A^{n}+g_{1}+g_{2})+\frac{\partial V_{1}}{\partial t}.
$
Since $f_{1}(\rho, -\theta, t)=-f_{1}(\rho, \theta, t),$ $[f_{1}]=\int_{\mathbb{T}^{1}}f_{1}(\rho, \theta, t)d \theta=0.$
So by setting
$d A^{n}\rho^{2\beta-1}\frac{\partial V_{1}}{\partial \theta}+f_{1}(\rho, \theta, t)=0,$
we can get
$V_{1}(\rho, \theta, t)=-\int_{0}^{\theta}\frac{f_{1}(\rho, s, t)}{d A^{n}\rho^{2\beta-1}}ds.$
Recall $f_{1}(\rho, -\theta, t)=-f_{1}(\rho, \theta, t)$ and $f_{1}=O_{1}(A^{n-1}).$ We have
\begin{eqnarray}
\label{3.5.1} V_{1}(\rho, -\theta, t)=V_{1}(\rho, \theta, t)=V_{1}(\rho, \theta, -t), V_{1}(\rho, \theta, t)=O_{1}(A^{-1}),\;\;(\rho, \theta, t)\in D_{4}\times \mathbb{T}^{1}.
\end{eqnarray}
By the implicit function Theorem, we have that there exists the inverse of $\Phi^{1}$, say $\Psi^{1},$
which can be written as
$\Psi^{1}=(\Phi^{1})^{-1}:\;\;\rho=\mu+U_{1}(\mu, \phi, t),\;\;\theta=\phi,$
where
\begin{eqnarray}
\label{3.7.1} U_{1}(\mu, -\phi, t)=U_{1}(\mu, \phi, t)=U_{1}(\mu, \phi, -t),U_{1}=O_{1}(A^{-1}),\;\;(\mu, \phi, t)\in D_{4-C_{0}A^{-1}}\times \mathbb{T}^{1}.
\end{eqnarray}
Let
 $f_{1}^{(1)}
=\frac{\partial V_{1}(\mu+U_{1}(\mu, \phi, t), \phi, t)}{\partial \rho } f_{1}(\mu+U_{1}(\mu, \phi, t), \phi, t)
+\frac{\partial V_{1}(\mu+U_{1}(\mu, \phi, t), \phi, t)}{\partial \theta } g_{1}(\mu+U_{1}(\mu, \phi, t), \phi, t),$
$
f_{2}^{(1)}=f_{2}(\mu+U_{1}(\mu,\phi,t),\phi,t)+\frac{\partial V_{1}(\mu+U_{1}(\mu,\phi,t),\phi,t)}{\partial \rho }f_{2}
+\frac{\partial V_{1}(\mu+U_{1}(\mu,\phi,t),\phi,t)}{\partial\theta}g_{2}(\mu+U_{1}(\mu,\phi,t),\phi,t)
+\frac{\partial V_{1}(\mu+U_{1}(\mu,\phi,t),\phi,t)}{\partial t},$
$
g_{1}^{(1)}=g_{1}(\mu+U_{1}(\mu, \phi, t), \phi, t)-d A^{n}\mu^{2\beta-1}+d A^{n}(\mu+U_{1}(\mu, \phi, t))^{2\beta-1},$
$g_{2}^{(1)}=g_{2}(\mu+U_{1}(\mu, \phi, t), \phi, t).
$
Using \eqref{2.12}, \eqref{2.14}, \eqref{3.5.1} and \eqref{3.7.1}, we have that $f_{1}^{(1)}$, $f_{2}^{(1)}$ and $g_{1}^{(1)}$, $g_{2}^{(1)}$
satisfy \eqref{2.12} and \eqref{2.14}, respectively. Using \eqref{3.5.1} and \eqref{3.7.1}, we have
$
f_{1}^{(1)}=O_{1}(A^{n-2}),f_{2}^{(1)}=O(A^{-1}),g_{1}^{(1)}=O_{1}(A^{n-1}),g_{2}^{(1)}=O(A^{-1}),
(\mu, \phi)\in D_{4-C_{0}A^{-1}}.
$
This completes the proof of Lemma 3.1.
\end{proof}
Repeating Lemma 3.1 n times, we have a new equation (still by $(\rho, \theta)$ denoting the variables, for brevity):
\begin{eqnarray}\label{3.12}
\dot{\rho}=f_{1}^{(n)}(\rho, \theta, t)+f_{2}^{(n)}(\rho, \theta, t),\!
\dot{\theta}=d A^{n}\rho^{2\beta-1}\!+g_{1}^{(n)}(\rho, \theta, t)\!+g_{2}^{(n)}(\rho, \theta, t),
(\rho, \theta, t)\in D_{4-CA^{-1}}\times \mathbb{T}^{1}\!,
\end{eqnarray}
where $f_{1}^{(n)},$ $f_{2}^{(n)}$ and $g_{1}^{(n)},$ $g_{2}^{(n)}$ satisfy \eqref{2.12} and \eqref{2.14}, respectively,
and
\begin{equation}\label{3.13}
f_{1}^{(n)}=f_{2}^{(n)}=O(A^{-1}),\;
g_{1}^{(n)}=O_{1}(A^{n-1}),\;\;g_{2}^{(n)}=O(A^{-1}).
\end{equation}
Let $F^{(n)}=f_{1}^{(n)}(\rho, \theta, t)+f_{2}^{(n)}(\rho, \theta, t).$ Then
$F(\rho, -\theta, -t)=-F(\rho, \theta, t),(\rho, \theta)\in D_{4-CA^{-1}},t\in \mathbb{T}^{1}, F=O(A^{-1}).
$
Rewrite \eqref{3.12} as follows
\begin{eqnarray}\label{3.20}
    \dot{\rho}=F(\rho, \theta, t),
    \dot{\theta}=d A^{n}\rho^{2\beta-1}+c_{0}h(\rho, t)+g_{1}(\rho, \theta, t)+g_{2}(\rho, \theta, t),
\end{eqnarray}
where $(\rho, \theta)\in D_{4-CA^{-1}},$ $t\in \mathbb{T}^{1},$ and
\begin{eqnarray}\label{3.21}
&&F(\rho, -\theta, -t)=-F(\rho, \theta, t),\;F=O(A^{-1}),\;
h(\rho, -t)=h(\rho, t),\;h=O_{1}(A^{n-1}),\\
\label{3.25}
&&g_{1}(\rho, -\theta, t)=g_{1}(\rho, \theta, t),\;g_{1}(\rho, \theta, -t)=g_{1}(\rho, \theta, t),\;
g_{2}(\rho, -\theta, -t)=g_{2}(\rho, \theta, t),\\
\label{3.26}
&&g_{1}=O_{1}(A^{n-1}),\;g_{2}=O(A^{-1}),\;
c_{0}=0.
\end{eqnarray}
\begin{lemma}
There exists a diffeomorphism
$\psi_{2}:\rho=\mu, \theta=\phi+U_{2}(\mu, \phi, t),(\mu, \phi, t)\in D_{4-C_{0}A^{-1}}\times \mathbb{T}^{1}$
such that $\psi_{2}(D_{4-C_{1}A^{-1}}\times \mathbb{T}^{1})\subset D_{4-C_{0}A^{-1}}\times \mathbb{T}^{1}, \;(C_{1}>C_{0}),$ and
\eqref{3.20} is transformed into
\begin{eqnarray}\label{3.28}
    \dot{\rho}=F^{(1)}(\rho, \theta, t),       \;
     \dot{\theta}=d A^{n}\rho^{2\beta-1}+h^{(1)}(\rho, t)+g_{1}^{(1)}(\rho, \theta, t)+g_{2}^{(1)}(\rho, \theta, t),
\end{eqnarray}
where $F^{(1)}$, $h^{(1)}$ satisfy \eqref{3.21}, $g^{(1)}_{1}, g^{(1)}_{2}$ satisfy \eqref{3.25}
and
$g_{1}^{(1)}=O_{1}(A^{n-2}),\;\;g_{2}^{(1)}=O(A^{-1}).$
\end{lemma}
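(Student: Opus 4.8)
The plan is to treat this as the angular analogue of Lemma~\ref{3.1}: there a shift in $\rho$ removed the odd-in-$\theta$ term $f_1$ from the $\dot\rho$-equation, whereas here a shift in $\theta$ will remove the \emph{zero-mean} part of the even-in-$\theta$ term $g_1$ from the $\dot\theta$-equation, its non-removable $\theta$-average being absorbed into the new $(\rho,t)$-term $h^{(1)}$. Concretely, I would first introduce the near-identity forward map $\rho=\mu,\ \phi=\theta+W_2(\rho,\theta,t)$ and compute, using \eqref{3.20} with $c_0=0$,
\begin{equation*}
\dot\phi=dA^{n}\rho^{2\beta-1}+dA^{n}\rho^{2\beta-1}\frac{\partial W_2}{\partial\theta}+g_1+g_2+(g_1+g_2)\frac{\partial W_2}{\partial\theta}+\frac{\partial W_2}{\partial\rho}F+\frac{\partial W_2}{\partial t}.
\end{equation*}
Since $\rho=\mu$ is left fixed, there is no analogue of the $\mu^{2\beta-1}$ mismatch that appeared in Lemma~\ref{3.1}, which slightly simplifies the bookkeeping.

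Next I would split $g_1=[g_1]+\widetilde g_1$, where $[g_1](\rho,t)=\int_{\mathbb{T}^1}g_1(\rho,\theta,t)\,d\theta$ and $\widetilde g_1=g_1-[g_1]$ has zero $\theta$-average, and impose the homological equation $dA^{n}\rho^{2\beta-1}\,\partial_\theta W_2+\widetilde g_1=0$. Because $[\widetilde g_1]=0$, this is solved by the $1$-periodic function $W_2=-\int_0^\theta \widetilde g_1(\rho,s,t)\,ds/(dA^{n}\rho^{2\beta-1})$, and since $g_1=O_1(A^{n-1})$ one gets $W_2=O_1(A^{-1})$. Using the parities $g_1(\rho,-\theta,t)=g_1(\rho,\theta,t)$ and $g_1(\rho,\theta,-t)=g_1(\rho,\theta,t)$ one checks that $W_2$ is odd in $\theta$ and even in $t$; inverting by the implicit function theorem on a slightly smaller domain $D_{4-C_1A^{-1}}$ then yields $\theta=\phi+U_2(\mu,\phi,t)$ with $U_2$ odd in $\phi$, even in $t$, and $U_2=O_1(A^{-1})$. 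Substituting $\partial_\theta W_2=-\widetilde g_1/(dA^{n}\rho^{2\beta-1})$ back collapses the $A^{n-1}$ part of the drift to its mean, so I would set $h^{(1)}(\rho,t)=[g_1](\rho,t)$ (which is $O_1(A^{n-1})$ and even in $t$, hence satisfies \eqref{3.21}), collect the genuinely $\theta$-dependent leading remainder $g_1\,\partial_\theta W_2=O_1(A^{n-2})$ into $g_1^{(1)}$, and dump $g_2+g_2\,\partial_\theta W_2+\partial_\rho W_2\,F+\partial_t W_2=O(A^{-1})$ into $g_2^{(1)}$, finally composing everything with $\theta=\phi+U_2$ to pass to the new variables.

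The remaining work is to verify the symmetry relations \eqref{3.21}, \eqref{3.25} and the stated orders. The parity checks reduce to products of the elementary parities of the factors: with $W_2$ odd in $\theta$ and even in $t$, $\partial_\theta W_2$ is even in $\theta$ and even in $t$, so $g_1\,\partial_\theta W_2$ inherits evenness in both, giving $g_1^{(1)}$ the properties in \eqref{3.25}; likewise each term of $g_2^{(1)}$ is even under $(\theta,t)\mapsto(-\theta,-t)$ (using $F$ odd and $g_2$ even under this involution), and the composition with the odd-in-$\phi$, even-in-$t$ map $U_2$ preserves these parities exactly as in Lemma~\ref{3.1}. The point that needs the most care is the interplay with the low regularity: only the $C^1$-generated, $O_1$ term $g_1$ may enter $W_2$, since forming $\partial_t W_2$ requires one genuine $t$-derivative, whereas the merely-$L^1$ piece $g_2=O(A^{-1})$ must be carried along untouched as a small error. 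Routing the non-removable mean $[g_1]$ into $h^{(1)}$ (rather than attempting to kill it) and keeping the two remainder classes $O_1(\cdot)$ and $O(\cdot)$ rigorously separated is where the argument must be managed carefully; this separation is also precisely what will permit iterating the step to drive $g_1$ down to $O(A^{-1})$ in the sequel.
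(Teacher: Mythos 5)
Your proposal is correct and follows essentially the same route as the paper: the shear $\phi=\theta+V_2$ with $V_2=-\int_0^\theta\big(g_1-[g_1]\big)\,ds\,/\,(dA^{n}\rho^{2\beta-1}+c_0h)$, absorption of the mean $[g_1]$ into $h^{(1)}$, the same parity and $O_1/O$ bookkeeping, and inversion to get $U_2$. The only divergence is that you correctly retain the standalone transported term $g_2(\mu,\phi+U_2,t)$ inside $g_2^{(1)}$, which the paper's displayed formula omits (evidently a typo, harmless since that term is already $O(A^{-1})$ and has the right parity).
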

\begin{proof}
Define a transformation
$\Phi_{2}:\mu=\rho,\phi=\theta+V_{2}(\rho, \theta, t),$
where\\
$V_{2}(\rho, \theta, t)=-\int_{0}^{\theta}\frac{g_{1}(\rho, s, t)-[g_{1}](\rho, t)}{d A^{n}\rho^{2\beta-1}+c\,h(\rho, t)}ds,
[g_{1}]=\int_{\mathbb{T}^{1}}g_{1}(\rho, s, t)ds.$
By \eqref{3.21}-\eqref{3.26}, we have
\begin{equation*}\label{3.30}
V_{2}(\rho, -\theta, t)=-V_{2}(\rho, \theta, t),\;\;V_{2}(\rho, \theta, -t)=V_{2}(\rho, \theta, t),\;\;V_{2}=O_{1}(A^{-1}).
\end{equation*}
Moreover, doing as in the proof of Lemma 1, we have that there exists $U_{2}=U_{2}(\mu, \phi, t)$ satisfying
\begin{equation}\label{3.30+}
U_{2}(\mu, -\phi, t)=-U_{2}(\mu, \phi, t),\;\;U_{2}(\mu, \phi, -t)=U_{2}(\mu, \phi, t),\;\;U_{2}=O_{1}(A^{-1}),
\end{equation}
and
$
\psi_{2}=\Phi_{2}^{-1}:\;\;\rho=\mu,\theta=\phi+U_{2}(\mu, \phi, t)
$
such that
$\psi_{2}(D_{4-C_{3}A^{-1}}\times \mathbb{T}^{1})\subset D_{4-C_{2}A^{-1}}\times \mathbb{T}^{1},\;\;C_{3}>C_{2}.$
Then \eqref{3.28} is changed into
$
\dot{ \mu}=F^{(1)}(\mu, \phi, t), \;
    \dot{\phi}=d A^{n}\mu^{2\beta-1}+h^{(1)}(\mu, t)+g_{1}^{(1)}(\mu, \phi, t)+g_{2}^{(1)}(\mu, \phi, t),
$
where
$F^{(1)}=F(\mu, \phi+U_{2}(\mu, \phi, t), t),$
$h^{(1)}(\mu, t)=c_{0}h(\mu, t)+[g_{1}](\mu, t),$
$g_{1}^{(1)}(\mu, \phi, t)=g_{1}(\mu, \phi+U_{2}(\mu, \phi, t), t)\frac{\partial V_{2}(\mu, \phi+U_{2}(\mu, \phi, t), t)}{\partial \theta},$
$g_{2}^{(1)}(\mu, \phi, t)=F^{(1)}(\mu, \phi+U_{2}(\mu, \phi, t), t)\frac{\partial V_{2}(\mu, \phi+U_{2}(\mu, \phi, t), t)}{\partial \rho}
+g_{2}(\mu, \phi+U_{2}(\mu, \phi, t), t)\frac{\partial V_{2}(\mu, \phi+U_{2}(\mu, \phi, t), t)}{\partial \theta}+\partial_{t}V_{2}(\mu, \phi+U_{2}(\mu, \phi, t), t).
$
By \eqref{3.21}-\eqref{3.26} and \eqref{3.30+}, we have that $F^{(1)},$ $h^{(1)},$ $g_{1}^{(1)},$ $g_{2}^{(1)}$ satisfy \eqref{3.21}, \eqref{3.25} and
$g_{1}^{(1)}=O_{1}(A^{n-2}),g_{2}^{(1)}=O(A^{-1}).$ This completes the proof of Lemma 2.
\end{proof}
Note that if $g(\rho, -\theta, t)=g(\rho, \theta, -t),$ we have $g(\rho, -\theta, -t)=g(\rho, \theta, t).$ Repeating Lemma 2 n times, we have that
\eqref{3.20} is changed into
\begin{equation}\label{3.50}
    \dot{\rho}=\mathcal{F}(\rho, \theta, t),    \;
    \dot{\theta}=d A^{n}\rho ^{2\beta-1}+H(\rho, t)+G(\rho, \phi, t),
\end{equation}
where $\mathcal{F}(\rho, -\theta, -t)=-\mathcal{F}(\rho, \theta, t),\mathcal{F}=O(A^{-1}),H(\rho, -t)=H(\rho, t),\;H=O_{1}(A^{n-1}),$
$G(\rho, -\theta, -t)=G(\rho, \theta, t), G=O(A^{-1}), (\rho, \theta, t)\in D_{4-C_{5}A^{-1}}\times \mathbb{T}^{1}, C_{5}>C_{4}.$
Let {$\lambda=\rho^{2\beta-1}+A^{-n}\int_{0}^{1}H(\rho, t)dt.$} By \eqref{3.50}, we get the time-1 map (refer to \cite{Liu1991,Liu1998})
\begin{equation}\label{p}
\mathcal{P}:\;
                    \lambda_{1}=\lambda_{0}+\xi (\lambda_{0}, \theta, A) ,\;
                   \theta_{1}=\theta_{0}+d A^{n}(\lambda+\eta (\lambda_{0}, \theta, A)),
(\lambda_{0}, \theta_{0})\in [2, 3]\times \mathbb{T}^{1},\;\;A\gg 1,
\end{equation}
where $\xi (\lambda_{0}, \theta_{0}, A),$ $\eta (\lambda_{0}, \theta_{0}, A)$ are analytic in $(\lambda_{0}, \theta_{0})\in [2, 3]\times \mathbb{T}^{1},$
$\xi (\lambda_{0}, -\theta_{0}, A)=-\xi (\lambda_{0}, \theta_{0}, A)$, $\eta (\lambda_{0}, -\theta_{0}, A)=\eta (\lambda_{0}, \theta_{0}, A)$,
$(\lambda_{0}, \theta_{0})\in [2, 3]\times \mathbb{T}^{1},$
$\sup_{(\lambda_{0}, \theta_{0})\in [2, 3]\times \mathbb{T}^{1}}|\xi (\lambda_{0}, \theta_{0}, A)|\leq C A^{-1},$ and
$\sup_{(\lambda_{0}, \theta_{0})\in [2, 3]\times \mathbb{T}^{1}}|\eta (\lambda_{0}, \theta_{0}, A)|\leq C A^{-1}.
$
\begin{lemma}\label{lem*}
Let $\Omega\subset \mathbb{{R}}^{m}$ be a closed ball of radius $1$ and $D_{*}\subset {C}^{m}$ be a complex neighbourhood of $\Omega.$ Let
$\tau_{0}, \widetilde{\tau}_{0}\in (0, 1].$ Let $B_{R}(b)$ also be a closed ball in $\mathbb{{R}}^{\kappa}$ with an arbitrary centre $b$ and radius
$R.$ Denote by $D$ the following domain in ${C}^{2m+\kappa}:$
$D=\{x\in {C}^{m}\mid |\mathrm{Im} x_{j}|< \tau_{0}\}\times \{y\in {C}^{m}\mid y\in D_{*}\}
\times \{\eta\in {C}^{\kappa}\mid |\eta_{t}-b_{t}|<R+\widetilde{\tau}_{0}\}.$
Suppose that $\gamma, c\in (0, 1]$ are fixed and on $D$ the following mappings are given:
$
A: (x, y, \eta)\mapsto (x+d \gamma A^{n}y+f^{1}(x, y, \eta),\; y+f^{2}(x, y, \eta),\; \eta+f^{3}(x, y, \eta)),$
$ G: (x, y, \eta)\mapsto (-x+\alpha^{1}(x, y, \eta),\;y+\alpha^{2}(x, y, \eta),\;\eta+\alpha^{3}(x, y, \eta)),
$
where $f^{\tau}$ and $\alpha^{\tau}$ $(\tau=1, 2, 3)$ are normal in $D$ functions.
Assume that $AGA=G$ throughout $D,$ where $AGA$ is defined, and if $\kappa>0$ then in addition $G^{2}=\mathrm{id}$ throughout
$D,$ where $G^{2}$ is defined. Let $K>0, \tilde{\varepsilon}>0.$ A number $\omega\in \mathbb{R}^{m}$ is called a number of type
 $\mathcal{M}_{m}(K, \tilde{\varepsilon})$
if for all $q\in \mathbb{Z}^{m}\setminus \{0\}$ and $\rho\in \mathbb{Z}$ such that $\mid \frac{(q, \omega)}{2\pi}-\rho\mid \geq \frac{K}{|q|^{m+\varepsilon}}.$
Introduce the notation $\Omega_{\gamma, C}=\{\omega\in \gamma d A^{n}\Omega\mid \omega $ {is of type} $\mathcal{M}_{m}(d A^{n}\gamma_{C}, 1)\}.$
Then for each $\varepsilon>0$ there exists $\delta>0$, depending only on $\varepsilon$, $D_{\tau}$ and $C$ but not on $\gamma,$ such
that if on $D$ $|f^{\tau}|<\gamma \delta$ and $|\alpha^{\tau}|<\gamma \delta$ then for each $\omega\in\Omega_{\gamma, C}$ the mappings $A$
and $G$ have a common invariant $(m+\kappa)$- dimensional manifold
\begin{equation}\label{*4}
x=\varphi+\Phi_{\omega}^{1}(\varphi, \chi),\;y=\gamma^{-1}\omega+\Phi_{\omega}^{2}(\varphi, \chi),\;\;\eta=\chi+\Phi_{\omega}^{3}(\varphi, \chi),
\end{equation}
where $\Phi_{\omega}^{\tau}$ are normal in
$\{\varphi\in {C}^{m} \big| |\mathrm{ Im \varphi_{j}}|<\frac{\tau_{0}}{2}\}\times \{\chi\in C^{\kappa}\big||\chi_{t}-b_{t}|<R+\frac{\widetilde{\tau}_{0}}{2}\}$
functions, such that diffeomorphisms of the manifold \eqref{*4} induced by the mappings $A$ and $G$ are $(\varphi, \chi)\mapsto (\varphi+\omega, \chi)$
and $(\varphi, \chi)\mapsto (-\varphi, \chi)$ respectively (so that \eqref{*4} is foliated into invariant under $A$ and $G$ spaces $\chi=const$) and the following inequality holds
$\Phi_{\omega}^{\tau}<\varepsilon.$
Moreover, for every two $\omega^{1}$ and $\omega^{2}$ in $\Omega_{\gamma, C}$ the following estimate holds
$|\Phi_{\omega^{1}}^{\tau}-\Phi_{\omega^{2}}^{\tau}|<\gamma^{-1}|\omega^{1}-\omega^{2}|\varepsilon.$
\end{lemma}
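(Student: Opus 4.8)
The plan is to read this statement as a reversible KAM theorem and to construct the common invariant manifold \eqref{*4} by a quadratically convergent, Newton-type KAM iteration, using the reversibility relation $AGA=G$ (together with $G^{2}=\mathrm{id}$ when $\kappa>0$) as the structural substitute for the Hamiltonian identity. First I would recast the conclusion as a pair of conjugacy equations. Writing the sought embedding as $W_{\omega}(\varphi,\chi)=(\varphi+\Phi_{\omega}^{1},\ \gamma^{-1}\omega+\Phi_{\omega}^{2},\ \chi+\Phi_{\omega}^{3})$ and introducing the model maps $R_{\omega}(\varphi,\chi)=(\varphi+\omega,\chi)$ and $S(\varphi,\chi)=(-\varphi,\chi)$, the requirement that $A$ and $G$ act on the manifold as $\varphi\mapsto\varphi+\omega$ and $\varphi\mapsto-\varphi$ becomes $A\circ W_{\omega}=W_{\omega}\circ R_{\omega}$ and $G\circ W_{\omega}=W_{\omega}\circ S$. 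The $\chi=\mathrm{const}$ foliation is automatic because $\eta$ enters $A$ and $G$ only through the translations $f^{3}$ and $\alpha^{3}$, so $\chi$ is merely carried along as a parameter; on the model, $SR_{\omega}S=R_{\omega}^{-1}$ mirrors $GAG=A^{-1}$, confirming that the two conjugacies are compatible.

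The core is the linearized step. Given an approximate solution with error of size $e$, I would linearize both functional equations and obtain homological equations for the increments $\Delta\Phi_{\omega}^{\tau}$. The $A$-equation yields difference equations of the shape $\Delta\Phi^{\tau}(\varphi+\omega,\chi)-\Delta\Phi^{\tau}(\varphi,\chi)=(\text{known error})$, coupled through the twist $d\gamma A^{n}$; passing to Fourier series in $\varphi$ divides the coefficients by $e^{i(q,\omega)}-1$, and here $\omega\in\Omega_{\gamma,C}$, i.e. $\omega$ of the Diophantine type $\mathcal{M}_{m}$ fixed in the definition of $\Omega_{\gamma,C}$, supplies the lower bound $|(q,\omega)/2\pi-\rho|\ge dA^{n}\gamma_{C}/|q|^{m+1}$ needed to invert these divisors with a controlled loss of analyticity strip. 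The $G$-equation is met by imposing the correct parity (normality in the relevant sense) on each $\Delta\Phi^{\tau}$. I expect the verification that $AGA=G$ forces the solvability conditions, the zero-mode and secular obstructions, to cancel exactly to be the main obstacle: it is the one place where the structure, rather than mere smallness, is used, and it is what allows the average of $\Delta\Phi^{2}$ to be pinned by the prescribed frequency $\gamma^{-1}\omega$ while the remaining modes are uniquely determined.

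For convergence I would run the standard scheme on a nested sequence of complex strips whose half-widths decrease geometrically from $\tau_{0},\widetilde{\tau}_{0}$ toward $\tau_{0}/2,\widetilde{\tau}_{0}/2$, using Cauchy estimates to control the derivatives lost at each homological solution and proving a quadratic inequality $e_{k+1}\le C_{k}\,e_{k}^{2}$ in which the divisor losses are absorbed by the strip shrinkage. Under the hypothesis $|f^{\tau}|,|\alpha^{\tau}|<\gamma\delta$ with $\delta=\delta(\varepsilon,D_{*},C)$ chosen small, the iteration converges and the limit is analytic and normal on the half-strip with $\Phi_{\omega}^{\tau}<\varepsilon$. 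The scaling $y=\gamma^{-1}\omega+\Phi_{\omega}^{2}$ together with the uniform bound $\gamma\delta$ on the perturbations is exactly what renders $\delta$ independent of $\gamma$.

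Finally, for the Lipschitz dependence I would run the scheme simultaneously for two frequencies $\omega^{1},\omega^{2}\in\Omega_{\gamma,C}$ and estimate $\Phi_{\omega^{1}}^{\tau}-\Phi_{\omega^{2}}^{\tau}$ stage by stage. Since the only $\omega$-dependence enters through the prescribed value $\gamma^{-1}\omega$ and through the divisors $e^{i(q,\omega)}-1$, differencing produces a factor proportional to $\gamma^{-1}|\omega^{1}-\omega^{2}|$, while the divisors are again tamed by the $\mathcal{M}_{m}$ condition holding on the whole Cantor set $\Omega_{\gamma,C}$; telescoping through the iteration then gives the Whitney-type estimate $|\Phi_{\omega^{1}}^{\tau}-\Phi_{\omega^{2}}^{\tau}|<\gamma^{-1}|\omega^{1}-\omega^{2}|\varepsilon$. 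Throughout, the delicate uniformity of the small-divisor bounds over the entire admissible frequency set is precisely what the type-$\mathcal{M}_{m}$ definition is engineered to deliver.
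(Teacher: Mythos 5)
Your proposal and the paper take different routes only in the sense that the paper does not actually prove this lemma: its entire ``proof'' is the observation that the statement is Theorem~1.1 of Sevryuk's \emph{Reversible Systems} (Lecture Notes in Math.~1211) in the case $dA^{n}=1$, together with the remark that for $dA^{n}\neq 1$ the argument is the same up to rescaling and is therefore omitted. What you have written is a reconstruction of the KAM iteration that underlies that cited theorem, and your outline is faithful to it: the reformulation as the pair of conjugacy equations $A\circ W_{\omega}=W_{\omega}\circ R_{\omega}$ and $G\circ W_{\omega}=W_{\omega}\circ S$, the Fourier-side inversion of the divisors $e^{i(q,\omega)}-1$ using the $\mathcal{M}_{m}$ lower bound on $|(q,\omega)/2\pi-\rho|$, the role of $AGA=G$ as the substitute for exactness, the $\gamma$-scaling that makes $\delta$ independent of $\gamma$, and the simultaneous iteration for two frequencies giving the Lipschitz-in-$\omega$ estimate are all exactly the mechanisms of Sevryuk's proof. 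The one place where your text is a plan rather than an argument is the step you yourself flag as the main obstacle: showing that reversibility forces the secular (zero-mode) obstruction in the homological equation for the $y$-increment to vanish, so that the frequency can be pinned at $\gamma^{-1}\omega$. That cancellation is the entire non-trivial content of a reversible KAM theorem --- without it the scheme does not close --- so as submitted the proposal is an accurate roadmap, not a complete proof; given that the paper outsources the same content to the literature, this is not a defect relative to the paper, but you should either carry out that computation (solve $G\circ W=W\circ S$ first, deduce the parity of the error terms in the $A$-equation, and check that the odd parity of the $y$-error kills its mean) or cite Sevryuk for it explicitly. A smaller inaccuracy: the $\chi=\mathrm{const}$ foliation is not ``automatic'' because $\eta$ is ``merely carried along'' --- $f^{3}$ and $\alpha^{3}$ depend on $(x,y,\eta)$, so the invariance of the leaves is part of what the iteration must produce (this is why the lemma additionally assumes $G^{2}=\mathrm{id}$ when $\kappa>0$); in the present application $\kappa=0$ and the point is moot.
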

The present theorem is Theorem 1.1 of \cite{Sevryuk} when $d A^{n}=1.$ When $d A^{n}\neq 1,$ the proof is similar to that of Theorem 1.1 in \cite{Sevryuk} and so is omitted. See \cite{Sevryuk} for the details.

\ \

{\bf Proof of Theorem \ref{thm1}} Let $G: (\rho, \theta)\mapsto (\rho, -\theta)$ in Lemma \ref{lem*} and let $\eta$ vanish. By Lemma \ref{lem*} and \eqref{p}, $\mathcal{P}$ has an invariant curve $\mathcal{T}$ in the
annulus $[2, 3]\times \mathbb{T}^{1}.$ Since $A\gg 1$, it follows that the time-$1$ map of the original system has an invariant curve $\mathcal{T}_{A}$ in
$[2 A+C, 3A-C]\times \mathbb{T}^{1}$ with $C$ being a constant independent of $A.$ Choosing $A=A_{k}\rightarrow \infty$ as $k\rightarrow \infty,$
we have that there are countable many invariant curves $\mathcal{T}_{A_{k}},$ clustering at $\infty.$ Then any solution of the original system is bounded. Incidentally, we
can obtained that there are many infinite number of quasi-periodic solutions around the infinity in the $(x, \dot{x})$ plane.

\end{document}